 \def\BlackBoxes{\global\overfullrule5\p@}  
\theoremstyle{plain} 
\newtheorem{theorem}{Theorem} 
\newtheorem{corollary}{Corollary} 
\newtheorem{lemma}{Lemma} 
\theoremstyle{definition} 
\newtheorem{example}{Example}  
\newtheorem{examples}{Examples}  
\newtheorem{remark}{Remark} 
\newtheorem{remarks}{Remarks}
\begin{document} 
\date{} 
\title{Population Dynamics in Hostile Neighborhoods}
\author{Herbert Amann}
\maketitle
\begin{center} 
{\small 
Dedicated to Juli\'{a}n L\'{o}pez-G\'{o}mez,\\  
a~mathematical friend for many years.} 
\end{center}  
\begin{abstract} 
A~new class of quasilinear reaction-diffusion equations is 
introduced for which the mass flow never reaches the boundary. 
It is proved that the initial value problem is well-posed in 
an appropriate weighted Sobolev space setting. 
\end{abstract} 
\makeatletter
\def\blfootnote{\xdef\@thefnmark{}\@footnotetext}
\makeatother 
\blfootnote{
2010 Mathematics Subject Classification. 35K59, 35K65, 35K57\\ 
Key words and phrases: 
Degenerate quasilinear parabolic equations, reaction-diffusion 
systems, Sobolev space well-posedness.} 
 \newcommand*{\al}{\alpha}
 \newcommand*{\ba}{\beta}
 \newcommand*{\Da}{\Delta}
 \newcommand*{\da}{\delta}
 \newcommand*{\Ga}{\Gamma}
 \newcommand*{\ga}{\gamma}
 \newcommand*{\ia}{\iota}
 \newcommand*{\lda}{\lambda}
 \newcommand*{\Om}{\Omega}
 \newcommand*{\oO}{\ol{\Omega}} 
 \newcommand*{\pO}{{\pl\Omega}}
 \newcommand*{\sa}{\sigma}
 \newcommand*{\ta}{\theta}
 \newcommand*{\ve}{\varepsilon}
 \newcommand*{\vp}{\varphi}
 \newcommand*{\EeEn}{(E_1,E_0)}
 \newcommand*{\Mg}{(M,g)}
 \newcommand*{\BN}{{\mathbb N}}
 \newcommand*{\BR}{{\mathbb R}}
 \newcommand*{\cA}{{\mathcal A}}
 \newcommand*{\cB}{{\mathcal B}}
 \newcommand*{\cL}{{\mathcal L}}
 \newcommand*{\gF}{{\mathfrak F}}
 \newcommand*{\bal}{\begin{aligned}}
 \newcommand*{\eal}{\end{aligned}}
 \newcommand*{\uti}[1]{(#1)\space\space}
 \newcommand*{\qa}{,\qquad}
 \newcommand*{\qb}{,\quad}
 \newcommand*{\mf}[1]{\boldsymbol{#1}}        
 \newcommand*{\hb}[1]{\hbox{$#1$}} 
 \newcommand*{\sdot}{\!\cdot\!}
 \newcommand*{\sn}{\kern1pt|\kern1pt}
 \newcommand*{\ssm}{\!\setminus\!}
 \newcommand*{\vsdot}{\hbox{$\vert\sdot\vert$}}
 \newcommand*{\Vsdot}{\hbox{$\Vert\sdot\Vert$}}
 \newcommand*{\npbd}{\postdisplaypenalty=10000} 
 \newcommand*{\pr}{\hbox{$(\cdot,\cdot)$}}
 \newcommand*{\prsn}{\hbox{$(\cdot\sn\cdot)$}}
 \newcommand*{\pw}{\hbox{$\dl{}\sdot{},{}\sdot{}\dr$}}
 \newcommand*{\ol}{\overline}
 \newcommand*{\ul}{\underline}
 \newcommand*{\ph}{\phantom}
 \newcommand*{\dl}{\langle}
 \newcommand*{\dr}{\rangle}
 \newcommand*{\ra}{\rightarrow}
 \newcommand*{\hr}{\hookrightarrow}
 \newcommand*{\dist}{\mathop{\rm dist}\nolimits}
 \newcommand*{\tdiv}{\mathop{\rm div}\nolimits}
 \newcommand*{\grad}{\mathop{\rm grad}\nolimits}
 \newcommand*{\loc}{{\rm loc}}
 \newcommand*{\idmi}[1]{{\mbox{\scriptsize$#1${\rm-}}}}     
 \newcommand*{\is}{\subset}
 \newcommand*{\isis}{\Subset} 
 \newcommand*{\bt}{\bullet}
 \newcommand*{\es}{\emptyset}
 \newcommand*{\iy}{\infty}
 \newcommand*{\mt}{\mapsto}
 \newcommand*{\pl}{\partial}
 \newcommand*{\coW}{\kern-1pt}
 \makeatletter
 \newif\ifinany@
 \newcount\column@
 \def\column@plus{%
    \global\advance\column@\@ne
 }
 \newcount\maxfields@
 \def\add@amps#1{%
    \begingroup
        \count@#1
        \DN@{}%
        \loop
            \ifnum\count@>\column@
                \edef\next@{&\next@}%
                \advance\count@\m@ne
        \repeat
    \@xp\endgroup
    \next@
 }
 \def\Let@{\let\\\math@cr}
 \def\restore@math@cr{\def\math@cr@@@{\cr}}
 \restore@math@cr
 \def\default@tag{\let\tag\dft@tag}
 \default@tag

 \newbox\strutbox@
 \def\strut@{\copy\strutbox@}
 \addto@hook\every@math@size{%
  \global\setbox\strutbox@\hbox{\lower.5\normallineskiplimit
         \vbox{\kern-\normallineskiplimit\copy\strutbox}}}

 \renewcommand{\start@aligned}[2]{%
    \RIfM@\else
        \nonmatherr@{\begin{\@currenvir}}%
    \fi
    \null\,%
    \if #1t\vtop \else \if#1b \vbox \else \vcenter \fi \fi \bgroup
        \maxfields@#2\relax
        \ifnum\maxfields@>\m@ne
            \multiply\maxfields@\tw@
            \let\math@cr@@@\math@cr@@@alignedat
        \else
            \restore@math@cr
        \fi
        \Let@
        \default@tag
        \ifinany@\else\openup\jot\fi
        \column@\z@
        \ialign\bgroup
           &\column@plus
            \hfil
            \strut@
            $\m@th\displaystyle{##}$%
           &\column@plus
            $\m@th\displaystyle{{}##}$%
            \hfil
            \crcr
 }
 \renewenvironment{aligned}[1][c]{%
    \start@aligned{#1}\m@ne
 }{%
    \crcr\egroup\egroup
 }
 \makeatother
\section{Introduction}\label{sec-I} 
It has been known since long that spacial interactions in population dynamics 
can be adequately modeled by systems of reaction-diffusion equations 
(see E.E.~Holmes et al.~\cite{HLBV94a}, A.~Okubo and 
S.A.~Levin~\cite{OkL01a}, or J.D.~Murray~\cite{Mur89a}, \cite{Mur03a}, for 
instance). In general, these systems possess a quasilinear structure and 
show an extremely rich qualitative behavior depending on the various 
structural assumptions which can meaningfully be imposed. 

\smallskip 
Reaction-diffusion equations are of great importance also in many other 
scientific areas as, for example, physics, chemistry, mechanical and 
chemical engineering, and the 
social sciences. Thus our mathematical results are not restricted to 
population dynamics. It is just a matter of convenience to describe the 
phenomenological background and motivation in terms of populations. 

\smallskip 
Throughout this paper, $\Om$~is a bounded domain in~$\BR^m$ with a smooth 
boundary~$\Ga$ lying locally on one side of~$\Om$. (In population dynamics, 
\hb{m=1},~$2$, or~$3$. But this is not relevant for what follows.) By~$\nu$ 
we denote the inner (unit) normal vector field on~$\Ga$ and use $\cdot$ or 
\hb{\prsn} for the Euclidean inner product in~$\BR^m$. 

\smallskip 
We assume that $\Om$~is occupied by $n$~different species described by their 
densities 
\hb{u_1,\ldots,u_n} and set 
\hb{u:=(u_1,\ldots,u_n)}. The spacial and temporal change of~$u$, that is, 
the (averaged) movement of the individual populations, is mathematically 
encoded in the form of conservation laws 
\begin{equation}\label{I.cl} 
\pl_tu_i+\tdiv j_i(u)=f_i(u) 
\quad\text{in}\quad 
\Om\times\BR_+ 
\qa 1\leq i\leq n. 
\end{equation}  
Here $j_i(u)$~is the (mass) flux vector, $f_i(u)$~the production rate of the 
\hbox{$i$-th} species, and \eqref{I.cl} is a mass balance law (e.g., 
S.R.~de~Groot and P.~Mazur~\cite{GrM84a}). 

\smallskip 
In order to get a significant model we have to impose constitutive 
assumptions on the \hbox{$n$-tuple} 
\hb{j(u)=\bigl(j_1(u),\ldots,j_n(u)\bigr)} of flux vectors. In population 
dynamics it is customary to build on phenomenological laws which are 
basically variants and extensions of Fick's law, and we adhere in this paper 
to that practice. Thus we assume that 
$$ 
j_i(u)=-a_i(u)\grad u_i, 
$$ 
where the `diffusion coefficient' 
\hb{a_i(u)\in C^1(\Om)} may depend on the interaction of some, or all, 
species, hence on~$u$. The fundamental assumption is then that 
\begin{equation}\label{I.ai} 
a_i(u)(x)>0 
\qa x\in\Om. 
\end{equation}  

\smallskip 
Besides of modeling the behavior of the populations in~$\Om$, their conduct 
on the boundary~$\Ga$ has also to be analyzed. We restrict ourselves to 
homogeneous boundary conditions. Then there are essentially two cases 
which are meaningful, namely the Dirichlet boundary condition 
$$ 
u_i=0 
\quad\text{on}\quad 
\Ga\times\BR_+ 
$$ 
or the no-flux condition 
$$ 
\nu\cdot j_i(u)=0 
\quad\text{on}\quad 
\Ga\times\BR_+ 
\npbd 
$$ 
for population~$i$, or combinations thereof. 

\smallskip 
In the standard mathematical theory of reaction-diffusion equations it is 
assumed that $a_i(u)$~is uniformly positive on~$\Om$. 
The focus of this paper is on the nonuniform case where 
$a_i(u)$~may tend to zero as we approach~$\Ga$. 

\smallskip 
In population dynamics nonuniformly positive one-population models have been 
introduced by W.S.C. Gurney and R.M.~Nisbet~\cite{GuN75a} and 
M.E.~Gurtin and R.C.~MacCamy~\cite{GuM77a}. Arguing that the population 
desires to avoid overcrowding, they arrive at flux vectors of the form 
\hb{j(u)=-u^k\grad u} with 
\hb{k\geq1}. Thus their models are special instances of the porous media 
equation. Here the diffusion coefficient degenerates, in particular, 
near the Dirichlet boundary. 

\smallskip 
Ever since the appearance of the pioneering papers 
\cite{GuN75a},~\cite{GuM77a}, there have been numerous studies of the (weak) 
solvability of reaction-diffusion equations and systems exhibiting porous 
media type degenerations. We do not go into detail, since we propose a 
different approach. 

\smallskip 
In a series of papers, J.~L\'{o}pez-G\'{o}mez has studied (partly with 
coauthors) qualitative 
properties of \hbox{one-} and two-population models which he termed 
`degenerate' (see \cite{LoG16a}, \cite{LoMa17a}, \cite{LoMR17a}, 
\cite{DLo18a}, \cite{AALo20a}, and the references therein). In those works 
the term `degenerate', however, refers to the vanishing  on some open subset 
of~$\Om$ of the `logistic coefficient', which is part of~$f(u)$. 

\smallskip 
For the following heuristic discussion, which describes the essence of our 
approach, we can assume that 
\hb{n=1} and that $a$~is independent of~$u$. 

\smallskip 
If the Dirichlet boundary condition holds, then the population gets extinct 
if it reaches~$\Ga$. In the case of the no-flux condition, $\Ga$~is 
impenetrable, that is, the species can neither escape through the boundary 
nor can it get replenishment from the outside. In this sense we can say that 
the `population lives in a hostile neighborhood'. 

\smallskip 
No slightly sensible species will move toward places where it is 
endangered to get killed, nor will it run head-on against an impenetrable 
wall. Instead, it will slow down drastically if it comes near such places. 
In mathematical terms this means that the flux in the normal direction 
has to decrease to zero near~$\Ga$. To achieve this, the diffusion 
coefficient~$a$ has to vanish sufficiently rapidly at~$\Ga$. 

\smallskip 
To describe more precisely what we have in mind, we study the motion 
of the population in a normal collar neighborhood of~$\Ga$. This means that 
we fix 
\hb{0<\ve\leq1} such that, setting 
$$ 
S:=\bigl\{\,q+y\nu(q)\ ;\ 0<y\leq\ve,\ q\in\Ga\,\bigr\}, 
$$ 
the map 
\begin{equation}\label{I.ph} 
\vp\colon\ol{S}\ra[0,\ve]\times\Ga 
\qb q+y\nu(q)\mt(y,q) 
\end{equation}  
is a smooth diffeomorphism. Note that 
$$ 
y=\dist(x,\Ga) 
\qb x=q+y\nu(q)\in S. 
$$ 
We extend~$\nu$ to a smooth vector field on~$S$, again denoted by~$\nu$, 
by setting 
\begin{equation}\label{I.nu} 
\nu(x):=\nu(q) 
\qa x=q+y\nu(q)\in S. 
\end{equation}  
Then the normal derivative 
\begin{equation}\label{I.dnu} 
\pl_\nu u(x):=\nu(x)\cdot\grad u(x) 
\npbd 
\end{equation}  
is well-defined for 
\hb{x\in S}. 

\smallskip 
As usual, we denote by~$\vp_*$ the push-forward and by~$\vp^*$ the pull-back 
by~$\vp$ of functions and tensors, in particular of vector fields. Then 
\begin{equation}\label{I.phg} 
\vp_*(a\grad)=(\vp_*a)\frac\pl{\pl y}\oplus(\vp_*a)\grad_\Ga 
\quad\text{on }\quad 
N:=(0,\ve]\times\Ga, 
\end{equation}  
where $\grad_\Ga$~is the surface gradient on~$\Ga$ with respect to the 
metric induced by the Euclidean metric on~$\oO$. Note that, 
by \eqref{I.nu} and \eqref{I.dnu}, 
\begin{equation}\label{I.dd} 
\pl_y v=\pl_\nu u 
\qb v=\vp_*u. 
\end{equation}  
Set 
\hb{\Ga_y:=\vp^{-1}\bigl(\{y\}\times\Ga\bigr)}. Then 
\hb{\grad_{\Ga_y}=\grad_\Ga}. Hence we obtain from \eqref{I.phg} and 
\eqref{I.dd} that 
\begin{equation}\label{I.ag} 
a\grad u=(a\pl_\nu u)\nu\oplus a\grad_\Ga u 
\quad\text{on }S. 
\end{equation}  
Thus, if we want to achieve that the flux 
\hb{j(u)=-a\grad u} decays in the normal direction, but not necessarily 
in directions 
parallel to the boundary, we have to replace \eqref{I.ag} by 
$$ 
(a_1\pl_\nu u)\nu\oplus a\grad_\Ga u, 
$$ 
where $a_1$~tends to zero as $x$~approaches~$\Ga$. This we effectuate by 
replacing~$j(u)$ by 
\begin{equation}\label{I.js} 
j^s(u):=-\bigl((a\rho^{2s}\pl_\nu u)\nu\oplus a\grad_\Ga u\bigr) 
\qa u\in C^1(S), 
\end{equation}  
for some 
\hb{s\geq1}, where 
\hb{0<\rho\leq1} on~$S$ and 
\hb{\rho(x)=\dist(x,\Ga)} for $x$ near~$\Ga$. Then, irrespective of the size 
of~$\grad u$, \;$j^s(u)$~decays to zero as we approach~$\Ga$. The `speed' of 
this decay increases if $s$~gets bigger. Note, however, that the component 
orthogonal to~$\nu$ is the same as in \eqref{I.ag}. This reflects the fact, 
known to everyone who has been hiking in high mountains---in the Swiss Alps, 
for example(!)---that one can move forward along a level line path in 
front of a steep slope with essentially the same speed as this can be 
done in the flat country. On the other hand, one slows down drastically---and 
eventually gives up---if one tries to go to the top along a line of 
steepest ascent. 

\smallskip 
In the next section we give a precise definition of the class of degenerate 
equations which we consider. Section~\ref{sec-T} contains the definition of 
the appropriate weighted Sobolev spaces. In addition, we present the basic 
maximal regularity theorem for linear degenerate parabolic initial value 
problems. 

\smallskip 
The main result of this paper is Theorem~\ref{thm-Q.RD} which is proved in 
Section~\ref{sec-Q}. It guarantees the local well-posedness of quasilinear 
degenerate reaction-diffusion systems. In the last section we present some 
easy examples, discuss the differences between the present and the 
classical approach, and suggest possible directions of further research. 
\section{Degenerate Reaction-Diffusion Operators}\label{sec-S} 
Let $\Mg$ be a Riemannian manifold. Then $\grad_g$, resp.~$\tdiv_g$, 
denotes the gradient, resp.\ divergence, operator on~$\Mg$. The Riemannian  
metric on~$\Ga$, induced by the Euclidean metric on~$\oO$, is written~$h$. 
Then 
\hb{\ol{N}=[0,\ve]\times\Ga} is endowed with the metric 
\hb{g_N:=dy^2+h}. 

\smallskip 
We fix 
\hb{\chi\in C^\iy\bigl([0,\ve],[0,1]\bigr)} satisfying 
$$ 
\chi(y)= 
\left\{
\bal 
{}
&1,     &\quad 0         &\leq y\leq\ve/3,\cr 
&0,     &\quad 2\ve/3    &\leq y\leq\ve.
\eal 
\right.
$$ 
Then 
$$ 
r(y):=\chi(y)y+1-\chi(y) 
\qa 0\leq y\leq\ve.  
$$ 
We set 
$$ 
S(j):=\vp^{-1}\bigl((0,j\ve/3]\times\Ga\bigr) 
\qa j=1,2, 
$$ 
and 
\hb{\rho:=\vp^*r=r\circ\vp^{-1}}. Then 
\hb{\rho\in C^\iy\bigl(S,(0,1]\bigr)} and 
\begin{equation}\label{S.rh} 
\rho(x)=  
\left\{ 
\bal 
{}
&\dist(x,\Ga),  &&\quad x\in S(1),\cr 
&1,             &&\quad x\in S\ssm S(2).
\eal 
\right. 
\end{equation}  

\smallskip 
Given a linear differential operator~$\cB$ on~$S$, we denote by~$\vp_*\cB$ 
its `representation in the variables 
\hb{(y,q)\in N}'. Thus $\vp_*\cB$, the push-forward of~$\cB$, is the 
linear operator on~$N$ defined by 
$$ 
(\vp_*\cB)w:=\vp_*\bigl(\cB(\vp^*w)\bigr) 
\qa w\in C^\iy(N). 
$$ 

\smallskip 
First we consider a single linear operator, that is, 
\hb{n=1} and 
$$ 
\cA v:=-\tdiv(a\grad v) 
$$ 
with 
\begin{equation}\label{S.a} 
a\in C^1(\Om) 
\qb a(x)>0\text{ for }x\in\oO.
\end{equation}  

\smallskip 
We set 
\hb{\ol{a}:=\vp_*a\in C^1(N)} and 
\hb{\ol{\cA}:=\vp_*\cA}. Then we find 
$$ 
\ol{\cA}w=-\tdiv_{g_N}(\ol{a}\grad_{g_N}w) 
=-\pl_y(\ol{a}\pl_yw)-\tdiv_h(\ol{a}\grad_hw) 
$$ 
for 
\hb{w\in C^2(N)}. By pulling~$\ol{\cA}$ back to~$S$ we obtain the 
representation 
\begin{equation}\label{S.Au} 
\cA u=-\pl_\nu(a\pl_\nu u)-\tdiv_h(a\grad_hu) 
\qa u\in C^2(S), 
\npbd 
\end{equation}  
of 
\hb{\cA\sn S}. 

\smallskip 
We put 
\begin{equation}\label{S.U} 
U:=\Om\ssm S(2) 
\end{equation}  
and fix 
\hb{s\in[1,\iy)}. Then we define a linear operator~$\cA_s$ on~$\Om$ 
by setting 
\begin{equation}\label{S.Asv}  
\cA_sv:=-\tdiv_s(a\grad_sv) 
\qa v\in C^2(\Om),  
\end{equation}   
where 
$$ 
\tdiv_s(a\grad_sv):=  
\left\{ 
\bal 
{}
&\tdiv(a\grad v),                              &&\quad v\in C^2(U),\cr 
&\rho^s\pl_\nu(a\rho^s\pl_\nu v) 
 +\tdiv_h(a\grad_hv),                           &&\quad v\in C^2(S).
\eal 
\right. 
$$ 
It follows from \eqref{S.rh}, \eqref{S.Au}, and \eqref{S.U} that $\cA_sv$~is 
well-defined for 
\hb{v\in C^2(\Om)}. The map~$\cA_s$ is said to be a linear 
\emph{\hbox{$s$-degenerate} reaction-diffusion} (or \emph{divergence form}) 
\emph{operator on}~$\Om$. 
\begin{remark}\label{rem-S.g} 
It has been shown in~\cite{Ama20a} that the right approach to study 
differential operators which are \hbox{$s$-degenerate}, 
is to endow~$S$ with the metric 
$$ 
g_s:=\vp^*(r^{-2s}dy^2\oplus h). 
$$ 
Then  
$$ 
a\grad_{g_s}u=(a\rho^{2s}\pl_\nu u)\nu\oplus a\grad_hu 
\qa u\in C^1(S), 
$$ 
which equals~$-j^s(u)$ of~\eqref{I.js}. Furthermore, 
$$ 
\cA_su=-\tdiv_{g_s}(a\grad_{g_s}u) 
\qa u\in C^2(S). 
$$ 
Thus $\cA_s$~is a~`standard' linear 
reaction-diffusion operator if $S$~is endowed with the 
metric~$g_s$.\hfill$\qed$ 
\end{remark} 
\section{The Isomorphism Theorem}\label{sec-T} 
The natural framework for an efficient theory of strongly degenerate 
reaction-diffusion systems are weighted function spaces which we introduce 
now. We assume throughout that 
$$ 
\bt\quad 
1<p<\iy. 
$$ 
Suppose 
\hb{s\geq1} and 
\hb{k\in\BN}. For 
\hb{u\in C^k(S)} set 
$$  
v(y,q):=\vp_*u(y,q)=u\bigl(q+y\nu(q)\bigr) 
$$ 
and 
$$ 
\|u\|_{W_{\coW p}^k(S;s)} 
:=\sum_{i=0}^k\Bigl(\int_0^\ve\big\|\big(r(y)^s\pl_y\big)^iv(y,\cdot) 
\big\|_{W_{\coW p}^{k-i}(\Ga)}^pr(y)^{-s}\,dy\Bigr)^{1/p}. 
$$ 
Then the weighted Sobolev space~$W_{\coW p}^k(S;s)$ is the completion 
in~$L_{1,\loc}(S)$ of the subspace of smooth compactly supported functions 
with respect to the norm~%
\hb{\Vsdot_{W_{\coW p}^k(S;s)}}. The \emph{weighted Sobolev 
space}~$W_{\coW p}^k(\Om;s)$ consists of all~$u$ belonging 
to~$L_{1,\loc}(\Om)$ with 
$$ 
u\sn S\in W_{\coW p}^k(\Om;s) 
\qb u\sn U\in W_{\coW p}^k(\Om). 
$$ 
It is a Banach space with the norm 
$$ 
u\mt\|u\sn S\|_{W_{\coW p}^k(S;s)} 
+\|u\sn U\|_{W_{\coW p}^k(U)}, 
\npbd 
$$ 
and 
\hb{L_p(\Om;s):=W_{\coW p}^0(\Om;s)}. Of course, $W_{\coW p}^k(U)$~is the 
usual Sobolev space. 

\smallskip 
To define weighted spaces of bounded \hbox{$C^k$~functions} we set 
\begin{equation}\label{T.Ck} 
\|u\|_{BC^k(S;s)} 
:=\sum_{i=0}^k\sup_{0<y<\ve} 
\big\|\big(r(y)^s\pl_y\big)^iv(y,\cdot)\big\|_{C^k(\Ga)}. 
\end{equation}  
The weighted  space~$BC^k(S;s)$ is the linear subspace of all 
\hb{u\in C^k(S)} for which the norm~\eqref{T.Ck} is finite. Then 
$BC^k(\Om;s)$ is the linear space of all 
\hb{u\in C^k(\Om)} with 
$$ 
u\sn S\in BC^k(S;s) 
\qb u\sn U\in BC^k(U). 
$$  
It is a Banach space with the norm 
\begin{equation}\label{T.SU} 
u\mt\|u\sn S\|_{BC^k(S;s)} 
+\|u\sn U\|_{BC^k(U)}. 
\end{equation}  

\smallskip 
The topologies of the weighted spaces $W_{\coW p}^k(\Om;s)$ and 
$BC^k(\Om;s)$ are independent of the particular choice of~$S$ (that is, of 
\hb{\ve>0}) and the cut-off function~$\chi$. 

\smallskip 
Let 
\hb{0<T<\iy} and set 
\hb{J:=[0,T]}. On 
\hb{\Om\times J} we introduce anisotropic weighted Sobolev spaces by 
$$ 
W_{\coW p}^{(2,1)}(\Om\times J;s) 
:=L_p\bigl(J,W_{\coW p}^2(\Om;s)\bigr) 
\cap W_{\coW p}^1\bigl(J,L_p(\Om;s)\bigr). 
$$ 

\smallskip 
We denote by~%
\hb{\pr_{\ta,p}} the real interpolation functor of exponent  
\hb{\ta\in(0,1)}. Then we institute a Besov space by 
$$ 
B_p^{2-2/p}(\Om;s) 
:=\bigl(L_p(\Om;s),W_{\coW p}^2(\Om;s)\bigr)_{1-1/p,p}. 
$$ 
\begin{lemma}\label{lem-T.em} 
The weighted spaces possess the same embedding and interpolation properties 
as their non-weighted versions. In particular, 
\begin{equation}\label{T.BC} 
BC^1(\Om)\hr BC^1(\Om;s)\hr BC(\Om). 
\end{equation}  
\end{lemma}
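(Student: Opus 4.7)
The plan is to handle the displayed chain directly from the definitions and to reduce the generic ``same embedding and interpolation properties'' claim to the corresponding theorems for Riemannian manifolds of bounded geometry via Remark~\ref{rem-S.g}. The single quantitative fact driving the display is that, by the definitions of $\chi$ and~$r$,
\begin{align*}
r(y)=1+\chi(y)(y-1)\leq1\qa 0\leq y\leq\ve\leq1,
\end{align*}
so $r(y)^s\leq1$ for every $s\geq1$: the weight only dampens normal derivatives.

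For $BC^1(\Om)\hr BC^1(\Om;s)$ I would push $u\in BC^1(\Om)$ forward to $v:=\vp_*u$ on $\ol N$, note that $\vp$ is a smooth diffeomorphism so $\|v\|_{BC^1(\ol N)}\leq c\|u\|_{BC^1(\Om)}$, and then use $r^s\leq1$ to majorise each of the two summands in~\eqref{T.Ck} (taken with $k=1$) by $\|v\|_{BC^1(\ol N)}$. On~$U$ one has $\rho\equiv1$ by~\eqref{S.rh}, so $BC^1(U;s)=BC^1(U)$ identically; assembling the two pieces through~\eqref{T.SU} completes this embedding.

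For $BC^1(\Om;s)\hr BC(\Om)$, membership in the left-hand space already incorporates $C^1(\Om)$-regularity, so only uniform boundedness needs argument. The $i=0$ summand in~\eqref{T.Ck} furnishes the pointwise bound
\begin{align*}
\sup_{(y,q)\in N}|v(y,q)|\leq\sup_{0<y<\ve}\|v(y,\sdot)\|_{C^1(\Ga)}\leq\|u\sn S\|_{BC^1(S;s)},
\end{align*}
which controls $|u|$ on~$S$, while boundedness on~$U$ is part of the $BC^1(U)$ norm; pulling the two contributions back through~$\vp$ closes the embedding.

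The generic assertion is where the real content lies. Following Remark~\ref{rem-S.g}, on~$(S,g_s)$ the vector field $r^s\pl_y$ is the normal component of $\grad_{g_s}$ and $r^{-s}\,dy$ is the normal factor of the $g_s$ volume form, so that $W_{\coW p}^k(\Om;s)$, $BC^k(\Om;s)$, and $B_p^{2-2/p}(\Om;s)$ are the standard Sobolev, H\"older, and real-interpolation Besov spaces on the Riemannian manifold obtained by grafting $(S,g_s)$ onto $(U,\text{Euclidean})$ along the compact collar $S(2)\ssm S(1)$. The full embedding, trace and interpolation catalog then follows from the corresponding statements on manifolds of bounded geometry treated in~\cite{Ama20a}. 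The main obstacle---and the reason $\chi$ is tailored the way it is---will be to check that the spliced metric has uniformly bounded geometry across the transition region $S(2)\ssm S(1)$; once that geometric verification is in hand, everything else reduces to quoting established results.
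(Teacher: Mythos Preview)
Your proposal is correct and follows essentially the same route as the paper: the first embedding comes from $|\rho^s\pl_\nu u|\leq|\pl_\nu u|$ (your $r^s\leq1$), the second from the observation that $BC(\Om;s)=BC(\Om)$ (equivalent to your $i=0$ summand argument), and the general assertion from a direct citation of Theorems~3.1 and~6.1 of~\cite{Ama20a}. Your Riemannian detour through Remark~\ref{rem-S.g} and the bounded-geometry check you flag as the ``main obstacle'' are already absorbed into those cited theorems, so the paper skips that layer entirely.
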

\begin{proof} 
The first assertion is a consequence of Theorems 3.1 and~6.1 
of~\cite{Ama20a}. The first embedding of~\eqref{T.BC} is obvious from 
\hb{|\rho^s\pl_\nu u|\leq|\pl_\nu u|} and \eqref{T.SU}. 
It remains to observe that 
\hb{BC(\Om;s)=BC(\Om)}. 
\end{proof} 
The following theorem settles the well-posedness problem for the linear 
initial value problem 
$$ 
\bal 
\pl_tu+\cA_s u  &=f    &&\quad\text{on }    &\Om    &\times J, \cr
      \ga_0u    &=u_0  &&\quad\text{on }    &\Om    &\times\{0\}, 
\eal 
\npbd 
$$ 
where $\ga_0$~is the trace operator at 
\hb{t=0} and $\cA_s$~is given by \eqref{S.Asv}. 
\addtocounter{theorem}{1} 
\begin{theorem}\label{thm-T.MR} 
Let 
\hb{1\leq s<\iy} and 
\hb{0<T<\iy}. Assume that there exists 
\hb{\ul{\al}>0} such that 
\begin{equation}\label{T.ai} 
a\in BC^1(\Om;s) 
\quad\text{and}\quad 
a\geq\ul{\al}. 
\end{equation}  
Then the map 
\hb{(\pl_t+\cA_s,\,\ga_0)} is a topological isomorphism from 
$$ 
W_{\coW p}^{(2,1)}(\Om\times J;s) 
\quad\text{onto}\quad 
L_p(\Om\times J;s)\times B_p^{2-2/p}(\Om;s).  
$$ 
\end{theorem}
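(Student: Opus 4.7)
The plan is to reduce the assertion to a standard maximal $L_p$-regularity result for uniformly strongly elliptic operators on a singular Riemannian manifold, exploiting the reformulation announced in Remark~\ref{rem-S.g}. Concretely, I would view $\Omega$ as carrying a singular metric $g_*$ which agrees with the Euclidean metric on $U$ and, on the collar $S$, coincides with $g_s=\vp^*(r^{-2s}dy^2\oplus h)$ after a smooth interpolation across the transition region $S(2)\ssm S(1)$ (where $r\equiv1$ anyway, so no interpolation is really needed). With this metric, $\cA_s u=-\tdiv_{g_*}(a\grad_{g_*}u)$ is a genuine divergence-form operator, and the weighted spaces $W_p^k(\Om;s)$, $BC^k(\Om;s)$, and the Besov space $B_p^{2-2/p}(\Om;s)$ are precisely the intrinsic Sobolev/Besov spaces attached to $(\Om,g_*)$; this matches the setup developed in~\cite{Ama20a} and is already partly invoked in Lemma~\ref{lem-T.em}.

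Once the problem is recast in this way, I would verify the two hypotheses needed for the general isomorphism theorem of~\cite{Ama20a}: first, that $(\Om,g_*)$ is a uniformly regular (singular) Riemannian manifold, which follows from the construction of $g_s$ on the collar and the smoothness of $\Ga$; second, that $\cA_s$ is uniformly strongly $g_*$-elliptic with coefficients of the correct $BC^1$-regularity. The ellipticity is immediate from \eqref{T.ai}: the principal symbol of $\cA_s$ at $x$ and cotangent vector $\xi$ is $a(x)\snorm\xi\snorm_{g_*(x)}^2\geq\ul\al\snorm\xi\snorm_{g_*(x)}^2$, so the strong ellipticity constant is $\ul\al$. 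The requirement $a\in BC^1(\Om;s)$ is designed exactly so that the first-order coefficients produced when expanding $\tdiv_{g_*}(a\grad_{g_*}\cdot{})$ are $g_*$-bounded, which is the correct $BC^1$-condition in the singular geometry.

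With these two structural points in place, the maximal regularity theorem for parabolic equations on uniformly regular singular manifolds yields the isomorphism $(\pl_t+\cA_s,\ga_0)$ from $L_p(J,W_p^2(\Om;s))\cap W_p^1(J,L_p(\Om;s))$ onto $L_p(\Om\times J;s)\times\bigl(L_p(\Om;s),W_p^2(\Om;s)\bigr)_{1-1/p,p}$. The identification of the trace space with $B_p^{2-2/p}(\Om;s)$ is built into the definition, and the standard trace theorem for anisotropic spaces (valid in the weighted setting by the same interpolation machinery used for Lemma~\ref{lem-T.em}) shows that $\ga_0$ is a retraction onto this Besov space.

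The step I expect to be the real obstacle is checking uniform regularity of $(\Om,g_*)$ in the sense of~\cite{Ama20a}, because the metric $g_s$ blows up at $\Ga$ and one must produce a uniformly locally finite atlas of charts in which the components of $g_*$ and their derivatives are controlled; this is precisely the technical payoff of the collar construction \eqref{I.ph} and the weight $\rho$ defined through \eqref{S.rh}, but it requires a careful comparison of the $\rho^s\pl_\nu$ derivatives appearing in the $BC^k(S;s)$ and $W_p^k(S;s)$ norms with the intrinsic covariant derivatives of $g_*$. Everything else—ellipticity, the form of the trace space, and the isomorphism itself—then follows by quoting the general result of~\cite{Ama20a}.
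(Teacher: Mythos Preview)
Your approach is correct and ultimately rests on the same reference, but the paper takes a shorter path through~\cite{Ama20a}. Rather than setting up the full singular Riemannian geometry $(\Om,g_*)$ and verifying its uniform regularity---which you rightly flag as the main technical obstacle---the paper simply expands $\cA_s$ explicitly: on~$U$ one has $\tdiv(a\grad v)=a\Da v+\langle da,\grad v\rangle$, while on~$S$ the weighted normal part reads $\rho^s\pl_\nu(a\rho^s\pl_\nu v)=a(\rho^s\pl_\nu)^2v+(\rho^s\pl_\nu a)\rho^s\pl_\nu v$ and the tangential part is $a\Da_hv+\langle da,\grad_hv\rangle$. Setting $R(t):=t^s$, these formulas together with~\eqref{T.ai} exhibit $\cA_s$ as a bc-regular $R$-degenerate uniformly strongly elliptic operator in the sense of \cite[(1.6)]{Ama20a} (via Theorems~6.1 and~7.2 there), and Theorem~1.3 of~\cite{Ama20a} then gives the isomorphism directly. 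The advantage of the paper's route is that the uniform-regularity verification you anticipate is already absorbed into the $R$-degenerate framework of~\cite{Ama20a}; your geometric reformulation is more conceptual and aligns with Remark~\ref{rem-S.g}, but it re-derives structural facts that~\cite{Ama20a} has already packaged for precisely this class of weights.
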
 
\begin{proof} 
Note that 
\begin{equation}\label{T.U} 
\tdiv(a\grad v)=a\Da v+\dl da,\grad v\dr 
\quad\text{on }U, 
\end{equation}  
where 
\hb{\pw}~stands for duality pairings. On~$S$ we get 
\begin{equation}\label{T.r} 
\rho^s\pl_\nu(a\rho^s\pl_\nu v) 
=a(\rho^s\pl_\nu)^2v+(\rho^s\pl_\nu a)\rho^s\pl_\nu v 
\end{equation} 
and 
\begin{equation}\label{T.S} 
\tdiv_h(a\grad_hv)=a\Da_hv+\dl da,\grad_hv\dr, 
\npbd 
\end{equation} 
where $\Da_h$~is the Laplace--Beltrami operator on~$\Ga$. 

\smallskip 
Set 
\hb{R(t):=t^s} for 
\hb{0\leq t\leq1}. Then we deduce from \hbox{\eqref{T.ai}--\eqref{T.S}} 
and from Theorems 6.1 and~7.2 of~\cite{Ama20a} that $\cA_s$~is a 
\hbox{bc-regular} \hbox{$R$-degenerate} uniformly strongly elliptic 
differential operator on~$\Om$ in the sense of \cite[(1.6)]{Ama20a}. 
(Observe that the regularity condition~(1.9) in that paper is only 
sufficient and stronger than~\eqref{T.ai}.) Hence the assertion follows from 
Theorem~1.3 of~\cite{Ama20a}. 
\end{proof} 
\addtocounter{corollary}{2} 
\begin{corollary}\label{cor-T.MR} 
$\cA_s$~has maximal \hbox{$L_p(\Om;s)$~regularity}. 
\end{corollary}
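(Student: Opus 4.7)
The plan is to read the corollary directly off Theorem~\ref{thm-T.MR} as the special case of vanishing initial data. By the standard definition, $\cA_s$ has maximal $L_p(\Om;s)$-regularity on $J=[0,T]$ if for every $f\in L_p\bigl(J,L_p(\Om;s)\bigr)$ there is a unique $u\in W_{\coW p}^1\bigl(J,L_p(\Om;s)\bigr)\cap L_p\bigl(J,D(\cA_s)\bigr)$ solving $\pl_tu+\cA_su=f$ with $\ga_0 u=0$, and $f\mt u$ is continuous. The anisotropic space introduced in Section~\ref{sec-T} is, by its very definition, $W_{\coW p}^{(2,1)}(\Om\times J;s)=W_{\coW p}^1\bigl(J,L_p(\Om;s)\bigr)\cap L_p\bigl(J,W_{\coW p}^2(\Om;s)\bigr)$, so the appropriate identification is $D(\cA_s)=W_{\coW p}^2(\Om;s)$; this is forced by the same theorem, since once the full map $(\pl_t+\cA_s,\ga_0)$ is an isomorphism onto the given product space, the $L_p$-realization of $\cA_s$ must have precisely $W_{\coW p}^2(\Om;s)$ as its graph-norm domain (no extra boundary condition survives, in accordance with the hostile-neighborhood heuristic that the flux does not reach~$\Ga$).

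With this identification, I would simply restrict the topological isomorphism of Theorem~\ref{thm-T.MR} to the closed subspace $\{u\in W_{\coW p}^{(2,1)}(\Om\times J;s):\ga_0 u=0\}$. Its image under $(\pl_t+\cA_s,\ga_0)$ is $L_p(\Om\times J;s)\times\{0\}$, so one obtains a bounded two-sided inverse $L_p(\Om\times J;s)\to\{u\in W_{\coW p}^{(2,1)}(\Om\times J;s):\ga_0 u=0\}$ of $\pl_t+\cA_s$. This is exactly the maximal-regularity statement on~$J$, and since $T\in(0,\iy)$ is arbitrary, it holds on every compact time interval, which is the conventional meaning of the corollary. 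There is no genuine obstacle: the only conceptual point to record is the identification of $D(\cA_s)$ with the spatial part $W_{\coW p}^2(\Om;s)$, which is itself built into Theorem~\ref{thm-T.MR}; the remainder is a translation of the isomorphism statement into the maximal-regularity vocabulary.
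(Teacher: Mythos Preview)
Your argument is correct and is essentially what the paper's one-line proof (a bare citation to \cite{Ama95a} or \cite{Ama20a}) amounts to: those references contain the standard equivalence between the isomorphism property of $(\pl_t+\cA_s,\ga_0)$ in Theorem~\ref{thm-T.MR} and maximal $L_p$~regularity, and you have simply spelled out that equivalence by restricting to vanishing initial data. The identification $D(\cA_s)=W_{\coW p}^2(\Om;s)$ is indeed the one used throughout the paper, so there is nothing missing.
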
 
\begin{proof} 
\cite{Ama95a} or \cite{Ama20a}. 
\end{proof} 
\section{Quasilinear Degenerate Systems}\label{sec-Q} 
Now we turn to systems and consider quasilinear differential operators. 
Thus we assume: 
\begin{equation}\label{Q.ass} 
\bal 
{\rm(i)}\quad 
&1\leq s<\iy.\cr 
{\rm(ii)}\quad 
&X\text{ is a nonempty open subset of }\BR^n.\cr 
{\rm(iii)}\quad 
&a_i\in C^2\bigl(\oO\times X,\,(0,\iy)\bigr),\ 1\leq i\leq n. 
\eal 
\end{equation}  
Given 
\hb{u\in C^1(\Om,X)}, 
\begin{equation}\label{Q.ai} 
a_i(u)(x):=a_i\bigl(x,u(x)\bigr) 
\qa x\in\Om. 
\end{equation}  
Then 
$$ 
\cA_{i,s}(u)v_i:=-\tdiv_s\bigl(a_i(u)\grad_sv_i\bigr) 
\qa v_i\in C^2(\Om), 
$$ 
and, setting 
\hb{a:=(a_1,\ldots,a_n)}, 
$$ 
\cA_s(u)v 
:=-\tdiv_s\bigl(a(u)\grad_sv\bigr) 
:=\bigl(\cA_{1,s}(u)v_1,\ldots,\cA_{n,s}(u)v_n\bigr) 
$$ 
for 
\hb{v=(v_1,\ldots,v_n)\in C^2(\Om,\BR^n)}. Note that $\cA_s(u)$~is a diagonal 
operator whose diagonal elements are coupled by the 
\hbox{$u$-dependence} of their coefficients. 

\smallskip 
If $\gF(\Om;s)$ stands for one of the spaces 
$$ 
W_{\coW p}^k(\Om;s),\ BC^k(\Om;s),\text{ or }B_p^{2-2/p}(\Om;s),
\text{ then }\gF(\Om,\BR^n;s):=\gF(\Om;s)^n. 
$$ 
Given subsets $A$ and~$B$ of some topological spaces, 
\hb{A\isis B} means that $\ol{A}$~is compact and contained in the interior 
of~$B$. 

\smallskip 
We define 
\begin{equation}\label{Q.V} 
V:=\bigl\{\,v\in B_p^{2-2/p}(\Om,\BR^n;s) 
\ ;\ v(\Om)\isis X\,\bigr\}. 
\end{equation}  
\setcounter{lemma}{0} 
\begin{lemma}\label{lem-Q.V} 
If 
\hb{p>m+2}, then $V$~is open in~$B_p^{2-2/p}(\Om,\BR^n;s)$. 
\end{lemma}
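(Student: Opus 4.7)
The plan is to open $V$ by reducing the claim to a quantitative uniform bound on $v$, obtained from a Sobolev-type embedding into $BC(\Omega)$, together with the fact that $X$ is open.

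First I would observe that, by Lemma~\ref{lem-T.em}, the weighted Besov space $B_p^{2-2/p}(\Om;s)$ inherits the classical embedding properties of $B_p^{2-2/p}(\Om)$. The classical Besov-Hölder embedding $B_p^{2-2/p}(\Om)\hr BC^1(\Om)$ holds as soon as $2-2/p-m/p>1$, that is, precisely when $p>m+2$. Combining this with the chain \eqref{T.BC} of Lemma~\ref{lem-T.em}, I obtain
\begin{equation*}
B_p^{2-2/p}(\Om,\BR^n;s)\hr BC^1(\Om,\BR^n;s)\hr BC(\Om,\BR^n),
\end{equation*}
with a continuous embedding constant $C$.

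Now fix $v\in V$. By definition of \eqref{Q.V}, the set $K:=\ol{v(\Om)}$ is compact and contained in the open set $X\is\BR^n$. Hence there exists $\da>0$ such that the closed $\da$-neighborhood $K_\da:=\{\,y\in\BR^n \ ;\ \dist(y,K)\leq\da\,\}$ is itself a compact subset of~$X$. For any $w\in B_p^{2-2/p}(\Om,\BR^n;s)$ with $\|w-v\|_{B_p^{2-2/p}(\Om,\BR^n;s)}<\da/C$, the embedding above yields $|w(x)-v(x)|<\da$ for every $x\in\Om$, so $w(\Om)\is K_\da\isis X$. This shows that the open ball of radius $\da/C$ around $v$ lies in~$V$.

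The only nontrivial step is the embedding $B_p^{2-2/p}(\Om;s)\hr BC(\Om)$; once the hypothesis $p>m+2$ is invoked to guarantee it, the openness of $V$ becomes a routine topological consequence of the openness of $X$ and compactness of $\ol{v(\Om)}$. I expect no further obstacle, since the use of $p>m+2$ is exactly tailored to push the trace space below the $BC$-threshold.
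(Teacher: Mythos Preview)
Your argument is correct and is essentially the same as the paper's: both establish the embedding $B_p^{2-2/p}(\Om,\BR^n;s)\hookrightarrow BC(\Om,\BR^n)$ via Lemma~\ref{lem-T.em} under the hypothesis $p>m+2$, and then use compactness of $\overline{v(\Om)}$ together with openness of~$X$ to produce a sup-norm ball around~$v$ that lies in~$V$. The only cosmetic difference is that you spell out the Sobolev-exponent arithmetic $2-2/p-m/p>1$ explicitly and work with a closed $\delta$-neighborhood~$K_\delta$, whereas the paper uses an open $r$-neighborhood and phrases the conclusion in terms of the preimage under the embedding operator.
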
 
\begin{proof} 
It follows from Lemma~\ref{lem-T.em} that 
\begin{equation}\label{Q.em} 
B_p^{2-2/p}(\Om,\BR^n;s)\hr BC^1(\Om,\BR^n;s)\hr BC(\Om,\BR^n). 
\end{equation}  
Denote the embedding operator which maps the leftmost space into the 
rightmost one by~$\ia$. Let 
\hb{u_0\in V} so that 
\hb{u_0(\Om)\isis X}. 

\smallskip 
Fix\footnote{%
\hb{\dist\bigl(u_0(\Om),\es\bigr):=\iy}.} 
\hb{0<r<\dist\bigl(u_0(\Om),\pl X\bigr)} and set 
\begin{equation}\label{Q.K} 
K:=\bigl\{\,x\in X 
\ ;\ \dist\bigl(x,u_0(\Om)\bigr)<r\,\bigr\}\isis X 
\end{equation}  
and 
$$ 
B(u_0,r):=\bigl\{\,u\in BC(\Om,\BR^n) 
\ ;\ \|u-u_0\|_\iy<r\,\bigr\}. 
$$ 
Then 
\hb{u(\Om)\is K} for 
\hb{u\in B(u_0,r)}. Hence $B(u_0,r)$ is a neighborhood of~$u_0$ 
in $BC(\Om,K)$. Thus 
\hb{\ia^{-1}\bigl(BC(\Om,K)\bigr)} is a neighborhood of~$u_0$ 
in $B_p^{2-2/p}(\Om,\BR^n;s)$ and it is contained 
in~$V$. This proves the claim. 
\end{proof} 
For abbreviation, 
$$ 
E_0:=L_p(\Om,\BR^n;s) 
\qb E_1:=W_{\coW p}^2(\Om,\BR^n;s) 
\qb E:=B_p^{2-2/p}(\Om,\BR^n;s). 
$$ 
As usual, $\cL\EeEn$ is the Banach space of bounded linear operators 
from~$E_1$ into~$E_0$, and $C^\idmi{1}$~means `locally Lipschitz 
continuous'. 
\begin{lemma}\label{lem-Q.A} 
Suppose 
\hb{p>m+2}. Then 
$$ 
\bal 
{\rm(i)}\quad 
&\cA_s(u_0)\text{ has maximal $L_p(\Om,\BR^n;s)$ 
 regularity for }u_0\in V.\cr 
{\rm(ii)}\quad 
&\bigl(u\mt\cA_s(u)\bigr)\in C^\idmi{1}\bigl(V,\cL\EeEn\bigr). 
\eal 
$$ 
\end{lemma}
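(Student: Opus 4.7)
The plan has two halves, one for each assertion. For (i) I reduce each diagonal component $\cA_{i,s}(u_0)$ to Corollary~\ref{cor-T.MR} after verifying the hypotheses of Theorem~\ref{thm-T.MR}; for (ii) I exploit the embedding $E\hr BC^1(\Om,\BR^n;s)$ from Lemma~\ref{lem-T.em} and a chain-rule estimate to transfer the $C^2$-smoothness of the coefficients in the $u$-variable into local Lipschitz continuity of the operator-valued map $u\mt\cA_s(u)$.

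For part~(i), fix $u_0\in V$. By~\eqref{Q.em} we have $u_0\in BC^1(\Om,\BR^n;s)$, and by definition of~$V$ the closure of $u_0(\Om)$ is a compact subset of~$X$. Continuity and positivity of~$a_i$ on the compact set $\oO\times\ol{u_0(\Om)}$ then yield $\ul\al_i>0$ with $a_i(u_0)\geq\ul\al_i$. To check $a_i(u_0)\in BC^1(\Om;s)$ the only nontrivial direction is the normal derivative on the collar~$S$, where the chain rule gives
$$
\rho^s\pl_\nu\bigl(a_i(\cdot,u_0)\bigr)=\rho^s(\pl_{x,\nu}a_i)(\cdot,u_0)+\sum_j(D_{u_j}a_i)(\cdot,u_0)\,\rho^s\pl_\nu u_{0,j};
$$
the right-hand side is bounded because the partials of~$a_i$ are bounded on $\oO\times\ol{u_0(\Om)}$, $\rho\leq1$, and $u_0\in BC^1(\Om,\BR^n;s)$. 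Tangential derivatives are unweighted and require no further argument. Hypothesis~\eqref{T.ai} of Theorem~\ref{thm-T.MR} therefore holds for each diagonal entry $\cA_{i,s}(u_0)$, and Corollary~\ref{cor-T.MR} delivers maximal $L_p(\Om;s)$-regularity componentwise. Since $\cA_s(u_0)$ is diagonal, this transfers to maximal $L_p(\Om,\BR^n;s)$-regularity of the full system.

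For part~(ii), given $u_0\in V$ choose, as in the proof of Lemma~\ref{lem-Q.V}, a neighborhood~$\cU$ of $u_0$ in~$V$ and a compact $K\isis X$ with $u(\Om)\is K$ for every $u\in\cU$. On $\oO\times K$ the coefficients $a_i$ and their first $(x,u)$-derivatives are bounded and Lipschitz. Splitting
$$
\rho^s\pl_\nu\bigl(a_i(u)-a_i(w)\bigr)=\rho^s\bigl[(\pl_{x,\nu}a_i)(\cdot,u)-(\pl_{x,\nu}a_i)(\cdot,w)\bigr]+D_ua_i(\cdot,u)\cdot\rho^s\pl_\nu(u-w)+\bigl[D_ua_i(\cdot,u)-D_ua_i(\cdot,w)\bigr]\cdot\rho^s\pl_\nu w
$$
and handling the tangential direction analogously yields
$$
\|a(u)-a(w)\|_{BC^1(\Om,\BR^n;s)}\leq C(\cU)\,\|u-w\|_{BC^1(\Om,\BR^n;s)},
$$
which, after invoking~\eqref{Q.em} once more, becomes a Lipschitz estimate in the $E$-norm. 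Finally, the expansions \eqref{T.U}, \eqref{T.r}, \eqref{T.S} show that $(b,v)\mt\tdiv_s(b\grad_s v)$ is bilinearly bounded from $BC^1(\Om,\BR^n;s)\times W_{\coW p}^2(\Om,\BR^n;s)$ into $L_p(\Om,\BR^n;s)$, so composing the two estimates gives $\|\cA_s(u)-\cA_s(w)\|_{\cL\EeEn}\leq C'\|u-w\|_E$ on~$\cU$, which is the assertion.

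The main obstacle is the weighted chain-rule estimate. Although formally routine, one must verify that the factor $\rho^s$ attached to~$\pl_\nu$ in the definition of $BC^1(\Om;s)$ is exactly what is needed to absorb the inner factor $\pl_\nu u$ produced when differentiating $a_i(\cdot,u)$; the coupling of the weight to the normal direction only (cf.\ the metric~$g_s$ in Remark~\ref{rem-S.g}) is precisely what makes both the chain-rule estimate for $u\mt a(u)$ and the subsequent bilinear bound for $\tdiv_s(b\grad_s v)$ work simultaneously.
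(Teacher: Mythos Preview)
Your proof is correct and follows essentially the same route as the paper's: both verify \eqref{T.ai} for each diagonal entry via the chain-rule computation of $\rho^s\pl_\nu\bigl(a_i(\cdot,u)\bigr)$ using the embedding $E\hr BC^1(\Om,\BR^n;s)$, invoke Corollary~\ref{cor-T.MR} componentwise for~(i), and then obtain the Lipschitz estimate on $a(u)$ in the $BC^1(\Om;s)$-norm by the same three-term splitting of the differentiated difference $a_i(u)-a_i(w)$ for~(ii). The only cosmetic difference is that the paper carries the $E$-norm on the right-hand side from the outset, whereas you first state the estimate in the $BC^1(\Om,\BR^n;s)$-norm and then convert; and you make the concluding bilinear bound $(b,v)\mt\tdiv_s(b\grad_sv)$ explicit, which the paper leaves implicit in ``This implies claim~(ii)''. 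One small point: for the third term in your splitting you need $\sup_S|\rho^s\pl_\nu w|$ bounded uniformly over~$\cU$, so $\cU$ should be chosen \emph{bounded} in~$E$ (the paper says this explicitly); this is a trivial adjustment.
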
 
\begin{proof} 
We denote by~$c$ constants~%
\hb{\geq1} which may be different from occurrence  to occurrence and write 
\hb{BC_s^1:=BC^1(\Om,\BR^n;s)} and 
\hb{BC:=BC(\Om,\BR^n)}. 

\smallskip 
Let 
\hb{u_0\in V}. Fix a bounded neighborhood~$V_K$ of~$u_0$ in 
\hb{\ia^{-1}\bigl(BC(\Om,K)\bigr)\is E}. This is possible by the 
preceding lemma.

\smallskip 
(1) 
It is a consequence  of \eqref{Q.ass}(iii) and \eqref{Q.K} that 
\begin{equation}\label{Q.ac} 
1/c\leq a_i(u)\leq c 
\qa i=1,\ldots,n 
\qb u\in V_K. 
\end{equation}  
Moreover, \eqref{Q.ass}(iii) also implies that $a_i$ and its Fr\'echet 
derivative~$\pl a_i$ are locally Lipschitz continuous. Hence 
\begin{equation}\label{Q.L} 
\bal 
{}
&a_i\text{ and $\pl a_i$ are bounded and uniformly}\cr 
\noalign{\vskip-1\jot} 
&\text{Lipschitz continuous on }\oO\times K 
\eal 
\end{equation}  
(e.g.,~\cite[Proposition~6.4]{Ama90g}). From this and \eqref{Q.em} we infer 
that 
\begin{equation}\label{Q.aL} 
\|a_i(u)-a_i(v)\|_\iy\leq c\,\|u-v\|_E 
\qa 1\leq i\leq n 
\qb u,v\in V_K. 
\end{equation}  

\smallskip 
(2) 
Let $i$ and~$j$ run from~$1$ to~$n$ and $\al$~from $1$ to~$m$. Then, 
using the summation convention writing 
\hb{u=(u^1,\ldots,u^n)}, 
\begin{equation}\label{Q.da} 
\pl_\al\bigl(a_i(u)\bigr)(x) 
=(\pl_\al a_i)\bigl(x,u(x)\bigr) 
+(\pl_{m+j}a_i)\bigl(x,u(x)\bigr)\pl_\al u^j(x) 
\end{equation}  
for 
\hb{x\in\Om}. By~\eqref{Q.em}, $V_K$~is bounded in~$BC_s^1$. 
From this, \eqref{Q.da}, and \eqref{Q.L} it follows  
\begin{equation}\label{Q.U} 
\sup_U\big|\pl_\al\bigl(a_i(u)\bigr)\big|\leq c 
\qa u\in V_K. 
\end{equation}  
Similarly, by employing local coordinates on~$\Ga$, 
\begin{equation}\label{Q.g} 
\sup_S\big|\grad_\Ga\bigl(a_i(u)\bigr)\big|_{T\Ga}\leq c 
\qa u\in V_K, 
\npbd 
\end{equation}  
where  
\hb{\vsdot_{T\Ga}} is the vector bundle norm on the tangent bundle~$T\Ga$ 
of~$\Ga$. 

\smallskip 
Let 
\hb{x\in S} and 
\hb{\al=1} in~\eqref{Q.da}. Then we get from 
\hb{0<\rho^s(x)\leq1}, the boundedness of~$V_K$ in~$BC_s^1$, 
and \eqref{Q.aL} that 
\begin{equation}\label{Q.S} 
\sup_S\big|\rho^s\pl_\nu\bigl(a_i(u)\bigr)\big| 
\leq c(1+\sup_S|\rho^s\pl_\nu u|)\leq c 
\qa u\in V_K. 
\end{equation}  
By collecting \eqref{Q.ac} and \hbox{\eqref{Q.U}--\eqref{Q.S}}, 
we find 
(cf.~\eqref{T.Ck} and \eqref{T.SU}) 
\begin{equation}\label{Q.bc} 
a(u)\in BC_s^1 
\qb \|a(u)\|_{BC_s^1}\leq c 
\qa u\in V_K. 
\npbd 
\end{equation}  
Now (i)~follows from \eqref{Q.ac} and Corollary~\ref{cor-T.MR}. 

\smallskip 
(3) 
Let 
\hb{u,v\in V}. Then 
$$ 
\bal 
{}
\pl_\al\bigl(a_i(u)-a_i(v)\bigr) 
&=\pl_\al(a_i)(u)-(\pl_\al a_i)(v)\cr 
&\ph{={}}
 +\bigl((\pl_{m+j}a)(u)-(\pl_{m+j}a)(v)\bigr)\pl_\al u^j\cr 
&\ph{={}}
 +(\pl_{m+j}a)(v)(\pl_\al u^j-\pl_\al v^j). 
\eal 
$$ 
Using \eqref{Q.L} and \eqref{Q.em}, we obtain 
$$ 
\sup_U|(\pl_\al a)(u)-(\pl_\al a)(v)| \leq c\,\|u-v\|_E 
\qa u,v\in V_K. 
$$ 
Similarly, employing also the boundedness of~$V_K$ in~$E$ and \eqref{Q.bc}, 
$$ 
\sup_U\big|\bigl(\pl_{m+j}a)(u)-(\pl_{m+j}a)(v)\bigr)(\pl_\al u^j)\big| 
\leq c\,\|u-v\|_E 
$$ 
and 
$$ 
\sup_U|(\pl_{m+j}a)(v)(\pl_\al u^j-\pl_\al v^j)| 
\leq c\,\|u-v\|_E 
$$ 
for 
\hb{u,v\in V_K}. Consequently, 
$$ 
\sup_U\big|\pl_\al\bigl(a(u)-a(v)\bigr)\big|\leq c\,\|u-v\|_E 
\qa u,v\in V_K. 
$$ 

\smallskip 
By analogous arguments we obtain, as in  step~(1), 
$$ 
\sup_S\big|\grad_\Ga\bigl(a(u)-a(v)\bigr)\big|_{T\Ga}\leq c\,\|u-v\|_E 
$$ 
and 
$$ 
\sup_S\big|\rho^s\pl_\nu\bigl(a(u)-a(v)\bigr)\big|\leq c\,\|u-v\|_E 
$$ 
for 
\hb{u,v\in V_K}. In summary and recalling \eqref{Q.aL}, 
$$ 
\|a(u)-a(v)\|_{BC_s^1}\leq c\,\|u-v\|_E 
\qa u,v\in V_K. 
\npbd 
$$ 
This implies claim~(ii). 
\end{proof} 
We also suppose 
\begin{equation}\label{Q.g1} 
g\in C^1(\oO\times X,\,\BR^{n\times n}) 
\end{equation}  
and define~$g(u)$ analogously to \eqref{Q.ai}. Then, using obvious 
identifications,  
\begin{equation}\label{Q.fg} 
f(u):=g(u)u 
\qa u\in C(\Om,X). 
\end{equation} 
\addtocounter{remark}{1} 
\begin{remark}\label{rem-Q.f} 
This assumption on the production rate in~\eqref{I.cl} is motivated 
by models from population dynamics. It means that the reproduction 
(birth or death) rate is proportional to the size of the actually present 
crowd. Already the diagonal form 
$$ 
f_i(u)=g_i(u)u_i 
\qa 1\leq i\leq n, 
$$ 
comprises the most frequently studied ecological models, namely the 
standard (two-population) models with competing (predator--prey 
or cooperative) species, for example. In those cases the~$g_i$ are affine 
functions of~$u$.\hfill$\qed$ 
\end{remark} 
\addtocounter{lemma}{1} 
\begin{lemma}\label{lem-Q.f} 
Let 
\hb{p>m+2}. Then 
$$ 
\bigl(u\mt f(u)\bigr)\in C^\idmi{1}\bigl(V,L_p(\Om\times\BR^n;s)\bigr). 
$$ 
\end{lemma}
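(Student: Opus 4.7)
The plan is to run a routine Nemytskii-type argument, essentially a streamlined version of the one in Lemma~\ref{lem-Q.A}, but now without any first-order derivatives to control. Let \hb{u_0\in V}. Exactly as in step~(1) of the proof of Lemma~\ref{lem-Q.A}, Lemma~\ref{lem-Q.V} furnishes a bounded neighborhood~$V_K$ of~$u_0$ in~$E$ such that \hb{u(\Om)\is K} for every \hb{u\in V_K}, with \hb{K\isis X} as in~\eqref{Q.K}.

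First I estimate the Nemytskii operator \hb{u\mt g(u)}. Since \hb{g\in C^1(\oO\times X,\BR^{n\times n})} by~\eqref{Q.g1}, and \hb{\oO\times K} is compact, the same reasoning that yielded~\eqref{Q.L} shows that $g$ is bounded and uniformly Lipschitz continuous on \hb{\oO\times K}. Combined with the embedding \hb{E\hr BC(\Om,\BR^n)} from~\eqref{Q.em}, this produces a constant \hb{c\geq1} (depending on~$V_K$) with
\begin{equation*}
\|g(u)\|_\iy\leq c
\qa
\|g(u)-g(v)\|_\iy\leq c\,\|u-v\|_E
\qa u,v\in V_K.
\end{equation*}

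Next I transfer these bounds to the weighted Lebesgue norm. Since $E$ is by definition the real interpolation space between \hb{L_p(\Om,\BR^n;s)} and \hb{W_{\coW p}^2(\Om,\BR^n;s)}, the embedding \hb{E\hr L_p(\Om,\BR^n;s)} is automatic, and $V_K$ is bounded in the latter space. The pointwise inequality \hb{|f(u)|\leq\|g(u)\|_\iy\,|u|} together with~\eqref{Q.fg} then yields \hb{f(u)\in L_p(\Om,\BR^n;s)} with \hb{\|f(u)\|_{L_p(\Om,\BR^n;s)}\leq c\,\|u\|_E} on~$V_K$. For the Lipschitz estimate I split
\begin{equation*}
f(u)-f(v)=\bigl(g(u)-g(v)\bigr)u+g(v)(u-v)
\end{equation*}
and take absolute values pointwise; integrating against the weighted measure defining \hb{L_p(\Om,\BR^n;s)} and invoking the sup-norm bounds above together with the boundedness of~$V_K$ in \hb{L_p(\Om,\BR^n;s)} gives
\begin{equation*}
\|f(u)-f(v)\|_{L_p(\Om,\BR^n;s)}\leq c\,\|u-v\|_E
\qa u,v\in V_K.
\end{equation*}

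I do not anticipate any real obstacle. The weight \hb{r^{-s}} appearing in the measure defining \hb{L_p(\Om;s)} plays no role here because the estimate \hb{|f(u)-f(v)|\leq c\bigl(|u|\,\|u-v\|_E+|u-v|\bigr)} is purely pointwise, so the same weight occurs on both sides of the integral. The argument is therefore strictly simpler than the one in Lemma~\ref{lem-Q.A}, where the tangential gradient and the weighted normal derivative of $a(u)$ also had to be controlled.
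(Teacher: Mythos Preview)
Your proposal is correct and follows essentially the same route as the paper's own proof: fix the neighborhood~$V_K$ from Lemma~\ref{lem-Q.V}, use \eqref{Q.g1} and the compactness of \hb{\oO\times K} together with the embedding \hb{E\hr BC} to get uniform sup-norm bounds and a Lipschitz estimate for \hb{u\mt g(u)}, and then combine these with \hb{E\hr L_p(\Om,\BR^n;s)} and the splitting \hb{f(u)-f(v)=(g(u)-g(v))u+g(v)(u-v)}. The paper's argument is exactly this, only more tersely written.
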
 
\begin{proof} 
Let 
\hb{u_0\in V} and fix~$V_K$ as in the preceding proof. Then it is obvious 
from \eqref{Q.em} and \eqref{Q.g1} that 
$$ 
\|g(u)\|_\iy\leq c 
\qa u\in V_K, 
$$ 
and 
$$ 
\|g(u)-g(v)\|_\iy\leq c\,\|u-v\|_E 
\qa u,v\in V_K. 
$$ 
Thus, since 
\hb{E\hr L_p(\Om,\BR^n;s)=:L_{p,s}}, 
$$ 
\|f(u)\|_{L_{p,s}}\leq\|g(u)\|_\iy\,\|u\|_{L_{p,s}}\leq c 
\qa u\in V_K, 
$$ 
and 
$$ 
\bal 
\|f(u)-f(v)\|_{L_{p,s}} 
&\leq\|g(u)-g(v)\|_\iy\,\|u\|_{L_{p,s}} 
 +\|g(v)\|_\iy\,\|u-v\|_{L_{p,s}}\cr 
&\leq c\,\|u-v\|_E  
\eal 
\npbd 
$$ 
for 
\hb{u,v\in V_K}. 
\end{proof} 
Now we can prove the main result of this paper, 
a general well-posedness theorem for strong 
\hbox{$L_p(\Om,\BR^n;s)$~solutions}, by simply referring to known results. 
The reader may 
consult \cite{Ama90g} or~\cite{Ama93a} for definitions and the facts on 
semiflows to which we appeal. 
\addtocounter{theorem}{2} 
\begin{theorem}\label{thm-Q.RD} 
Let \eqref{Q.ass}, \eqref{Q.g1}, and \eqref{Q.fg} be satisfied and assume 
\hb{p>m+2}. Define~$V$ by \eqref{Q.V}. 
Then the initial value problem for the \hbox{$s$-degenerate} 
quasilinear reaction-diffusion system 
\begin{equation}\label{Q.q} 
\bal 
\pl_tu-\tdiv_s\bigl(a(u)\grad_su\bigr)  &=f(u)    
  &&\quad\text{on }    &\Om    &\times\BR_+,\cr
\ga_0u                                  &=u_0  
  &&\quad\text{on }    &\Om    &\times\{0\}, 
\eal 
\end{equation}  
has for each 
\hb{u_0\in V} a~unique maximal solution 
$$ 
u(\cdot,u_0)\in W_{\coW p}^{(2,1)}\bigl(\Om\times[0,t^+(u_0)),X;s\bigr). 
$$ 
The map 
\hb{(t,u_0)\mt u(t,u_0)} is a locally Lipschitz continuous semiflow on~$V$. 
The exit time $t^+(u_0)$ is characterized by the following three (non 
mutually exclusive) alternatives: 
$$ 
\bal 
{\rm(i)}\quad 
&t^+(u_0)=\iy.\cr 
{\rm(ii)}\quad 
&\liminf_{t\ra t^+(u_0)}\dist\bigl(u(t,u_0)(\Om),\pl X\bigr)=0.\cr 
{\rm(iii)}\quad 
&\lim_{t\ra t^+(u_0)}u(t,u_0) 
 \text{ does not exist in }B_p^{2-2/p}(\Om,\BR^n;s).
\eal 
$$ 
\end{theorem}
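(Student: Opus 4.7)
The plan is to reduce the statement to the abstract semiflow theorem for quasilinear parabolic equations from~\cite{Ama93a} (see also~\cite{Ama90g}). That abstract framework asks for: (a)~a densely injected Banach couple~$\EeEn$ with continuous trace space~$E$, and an open subset $V\is E$; (b)~a map $u\mt A(u)\in\cL\EeEn$ that is locally Lipschitz on~$V$ and such that $A(u_0)$ has maximal $L_p$-regularity on bounded intervals for each $u_0\in V$; (c)~a locally Lipschitz map $F\colon V\ra E_0$. Under these hypotheses one obtains, for each $u_0\in V$, a unique maximal solution $u(\cdot,u_0)$ of $\pl_tu+A(u)u=F(u)$, $\ga_0u=u_0$, in the space $L_p\bigl([0,t^+),E_1\bigr)\cap W_{\coW p}^1\bigl([0,t^+),E_0\bigr)$, the locally Lipschitz semiflow property, and the standard trichotomy for~$t^+$: the solution is global, or leaves every set whose closure is a compact subset of~$V$.

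The identification is immediate with the choices already prepared in the excerpt. The interpolation identity $E=\bigl(E_0,E_1\bigr)_{1-1/p,p}$ is the definition of~$B_p^{2-2/p}(\Om,\BR^n;s)$, and the pointwise constraint ``$v(\Om)\isis X$'' in~\eqref{Q.V} makes sense because Lemma~\ref{lem-T.em} furnishes $E\hr BC(\Om,\BR^n)$. Lemma~\ref{lem-Q.V} supplies hypothesis~(a), i.e.\ openness of~$V$ in~$E$. Lemma~\ref{lem-Q.A} supplies hypothesis~(b): part~(ii) is the local Lipschitz continuity of $u\mt\cA_s(u)$ into $\cL\EeEn$, while part~(i) gives maximal $L_p(\Om,\BR^n;s)$-regularity of $\cA_s(u_0)$ for every $u_0\in V$ (ultimately via Theorem~\ref{thm-T.MR} and Corollary~\ref{cor-T.MR}, applied to the coefficients $a_i(u_0)$, which by~\eqref{Q.ac} and~\eqref{Q.bc} lie in $BC^1(\Om;s)$ and are uniformly positive). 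Lemma~\ref{lem-Q.f} supplies hypothesis~(c) for $F:=f$.

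With (a)--(c)~in place, the abstract semiflow theorem yields the maximal solution, its regularity in $W_{\coW p}^{(2,1)}\bigl(\Om\times[0,t^+),X;s\bigr)$, and the locally Lipschitz semiflow on~$V$. The only remaining step is to translate the abstract continuation alternative ``the orbit eventually leaves every compact subset of~$V$'' into the concrete trichotomy (i)--(iii). This is done by inspecting the form of~$V$ in~\eqref{Q.V}: a relatively compact subset $W\isis V$ is bounded in~$E$ and satisfies $\dist\bigl(w(\Om),\pl X\bigr)\geq\da_W>0$ for all $w\in W$; so if $t^+(u_0)<\iy$ and both (ii) and~(iii) fail, then $\{u(t,u_0)\,;\,t<t^+(u_0)\}$ has compact closure inside~$V$, contradicting the abstract alternative.

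The main obstacle in the whole argument is hypothesis~(b), namely maximal regularity in the weighted setting, since the degeneration of~$\cA_s$ at~$\Ga$ prevents a direct appeal to classical parabolic theory. That obstacle, however, has already been absorbed into Theorem~\ref{thm-T.MR} and into the bounds of Lemma~\ref{lem-Q.A}; once those are granted, the proof of Theorem~\ref{thm-Q.RD} is essentially a bookkeeping verification that the abstract framework applies.
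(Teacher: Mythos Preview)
Your proof is correct and follows essentially the same route as the paper: verify the hypotheses of an abstract quasilinear parabolic well-posedness theorem via Lemmas~\ref{lem-Q.V}, \ref{lem-Q.A}, and~\ref{lem-Q.f}, then invoke that theorem. The only difference is cosmetic: the paper cites Theorem~5.1.1 and Corollary~5.1.2 of Pr\"uss and Simonett~\cite{PS16a} for the abstract result (which deliver the trichotomy directly), whereas you cite the equivalent framework of~\cite{Ama93a}; your extra paragraph translating the continuation alternative into (i)--(iii) is therefore not needed if one appeals to~\cite{PS16a}, but it is correct as written.
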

\begin{proof} 
Due to Lemmas \ref{lem-Q.V}, \ref{lem-Q.A}, and~\ref{lem-Q.f}, this follows 
from Theorem~5.1.1 and Corollary~5.1.2 in J.~Pr\"{u}ss and 
G.~Simonett~\cite{PS16a}. 
\end{proof} 
\addtocounter{remarks}{5} 
\begin{remarks}\label{rem-Q.RD}  
(a) 
It is obvious from the above proofs that the regularity assumptions for $a$ 
and~$g$, concerning the variable 
\hb{x\in\Om}, are stronger than actually needed. We leave it to the 
interested reader to find out the optimal assumptions. 

\smallskip 
(b) 
Suppose that $a_i(u)$~is independent of~$u_j$ for 
\hb{j\neq i}. Then the theorem remains true, with the obvious definitions 
of the weighted spaces, if we replace~$\cA_{i,s}(u)$ by~$\cA_{i,s_i}(u_i)$ 
with 
\hb{1\leq s_i<\iy} for 
\hb{1\leq i\leq n}. 
\begin{proof} 
This follows by an inspection of the proof of Lemma~\ref{lem-Q.A}.
\end{proof} 
(c) 
For simplicity, we have assumed that 
\hb{\Ga=\pO}. It is clear that we can also consider the case where 
$\Ga$~is a proper open and closed subset of~$\pO$ and regular boundary 
conditions are imposed on the remaining part. 

\smallskip 
(d) 
Similar results can be proved for strongly coupled systems, so-called 
cross-diffusion equations.\hfill$\qed$ 
\end{remarks} 
\section{Examples and Remarks}\label{sec-E} 
We close this paper by presenting some easy examples. In addition, 
we include some remarks on open problems and suggestions for further 
research. Throughout this section, 
$$ 
\bt\quad 
1\leq s<\iy 
\quad\text{and}\quad 
p>m+2. 
$$ 
\begin{example}\label{exa-Q.ex} 
\uti{Two-population models} 
Let 
$$ 
a,b\in C^2(\oO,\BR_+) 
\qb a_i,b_i\in C^1(\oO), 
\ i=0,1,2 
\qb \al,\ba,\ga,\da\in\BR. 
$$ 
Consider the \hbox{$s$-degenerate} quasilinear system 
\begin{equation}\label{E.2} 
\bal 
\pl_tu-\tdiv_s\bigl((a+u^\al v^\ba)\grad_su\bigr) 
&=(a_0+a_1u+a_2v)u,\cr 
\pl_tv-\tdiv_s\bigl((b+u^\ga v^\da)\grad_sv\bigr) 
&=(b_0+b_1v+b_2u)v
\eal 
\npbd 
\end{equation}  
on 
\hb{\Om\times\BR_+}. 

\smallskip 
Suppose 
\hb{\ve>0} and
$$ 
(u_0,v_0)\in W_{\coW p}^2(\Om;s) 
\qa u_0,v_0\geq\ve. 
$$ 
Then there exist a maximal 
\hb{t^+=t^+(u_0,v_0)\in(0,\iy]} and a unique solution 
$$ 
(u,v)\in W_{\coW p}^{(2,1)}\bigl(\Om\times[0,t^+),\BR^2;s\bigr) 
\npbd 
$$ 
of~\eqref{E.2} satisfying 
\hb{u(t)(x)>0} and 
\hb{v(t)(x)>0} for 
\hb{x\in\Om} and 
\hb{0\leq t<t^+}. 
\end{example} 
\begin{proof} 
Theorem~\ref{thm-Q.RD} with 
\hb{X=(0,\iy)^2}. 
\end{proof} 
Observe that the right side of~\eqref{E.2} encompasses standard 
predator--prey as well as cooperation models, depending on the signs of the 
coefficient functions. 

\smallskip 
In the following examples we restrict ourselves to scalar equations. 
\setcounter{examples}{1} 
\begin{examples}\label{exa-E.ex} 
(a)    
\uti{Porous media equations} 
Let 
\hb{\al\in\BR\ssm\{0\}} and assume that 
\hb{g\in C^1(\oO\times\BR)}. Then 
$$ 
\pl_tu-\tdiv_s(u^\al\grad_su)=g(u)u 
\quad\text{ in }\Om\times\BR_+ 
$$ 
has for each 
\hb{u_0\in W_{\coW p}^2(\Om;s)} with 
\hb{u_0\geq\ve>0} a~unique maximal solution 
$$ 
u\in W_{\coW p}^{(2,1)}\bigl(\Om\times(0,t^+);s\bigr) 
\npbd 
$$ 
satisfying 
\hb{u(t)(x)>0} for 
\hb{x\in\Om} and 
\hb{0\leq t<t^+}.
\begin{proof} 
Theorem~\ref{thm-Q.RD} with 
\hb{X:=(0,\iy)}. 
\end{proof} 
(b) 
\uti{Diffusive logistic equations} 
Assume 
\hb{\al,\lda\in\BR} with 
\hb{\al>0}. Let 
\hb{a\geq0}. Set 
$$ 
\Da_s:=\tdiv_s\grad_s. 
$$ 
If 
$$ 
u_0\in W_{\coW p}^2(\Om;s) 
\qa u_0\geq\ve>0, 
$$ 
then there exist a maximal 
\hb{t^+\in(0,\iy]} and a unique solution 
$$ 
u\in W_{\coW p}^{(2,1)}\bigl(\Om\times[0,t^+);s\bigr) 
$$ 
of 
\begin{equation}\label{E.l} 
\pl_tu-\al\Da_su=(\lda-au)u 
\quad\text{on }\Om\times\BR_+, 
\npbd 
\end{equation}  
satisfying 
\hb{u(t)(x)>0} for 
\hb{x\in\Om} and 
\hb{0\leq t<t^+}. 
\begin{proof} 
This is essentially a subcase of Example~\ref{exa-E.ex}. 
\end{proof} 
\end{examples} 
The most natural question which now arises is: 
$$ 
\bt\quad 
\text{How can we prove global existence?}
$$ 
An attempt to tackle this challenge, which is already hard in the case of 
standard boundary value problems, is even more demanding in the 
present setting. To point out where some of the difficulties originate 
from, we 
review in the following remarks some of the well-known techniques, 
which have successfully been applied to parabolic boundary value problems, 
and indicate why they do not straightforwardly apply to 
\hbox{$s$-degenerate} problems. 
\setcounter{remark}{2} 
\begin{remark}\label{rem-E.M}  
\uti{Maximum principle techniques} 
First we look at the diffusive logistic equation~\eqref{E.l} and contrast 
it with the simple classical counterpart 
\begin{equation}\label{E.cl} 
\bal 
\pl_tu-\al\Da u &=(\lda-u)u &&\quad\text{on }    &\oO    &\times\BR_+,\cr
u               &=0         &&\quad\text{on }    &\Ga    &\times\BR_+.  
\eal 
\end{equation}  
Suppose 
\hb{\lda>0}. Then \eqref{E.cl} has for each sufficiently smooth 
initial value~$u_0$ satisfying 
\hb{0\leq u_0\leq\lda} a~unique global solution obeying the same bounds 
as~$u_0$. This is a consequence of the maximum principle, since 
$(0,\lda)$ is a pair of sub- and supersolutions. For the validity of this 
argument it is crucial that we deal with a boundary value problem. 

\smallskip 
In the \hbox{$s$-degenerate} case there is no boundary. Hence the preceding 
argument does not work, since there is no appropriate maximum 
principle.\hfill$\qed$ 
\end{remark} 
\begin{remark}\label{rem-E.S} 
\uti{Methods based on spectral properties} 
A~further important technique, which is useful in the case of boundary 
value problems, rests on spectral properties of the linearization of 
associated stationary elliptic equations. The most prominent case 
is supposedly the `principle of linearized stability' (and its 
generalizations to non-isolated equilibria, see \cite[Chapter~5]{PS16a}). 
In addition, the better part of the qualitative studies on semi- and 
quasilinear parabolic boundary value problems, as well for a single 
equation as for systems, is based on spectral properties, in particular 
on the existence and the nature of eigenvalues. 

\smallskip 
In the case  of boundary value problems, the associated linear elliptic 
operators have compact resolvents, due to the compactness of~$\oO$. In our 
situation, $\Om$,~more precisely, the Riemannian manifold 
\hb{\Om_s=(\Om,g_s)} introduced in Remark~\ref{rem-S.g}, is not compact. If 
\hb{s=1}, then it is a manifold with cylindrical ends in the sense of 
R.~Melrose~\cite{Mel93a} and others\footnote{I am grateful to Victor 
Nistor for pointing this out to me.}. For manifolds of this type---and 
more general ones---much is known about the \hbox{($L_2$)~spectrum} of the 
Laplace--Beltrami operator. In particular, the essential spectrum 
is not empty. 

\smallskip 
Nevertheless, we are in a simpler situation. In fact, the normal collar~$S$ 
can be represented as a half-cylinder over the compact manifold~$(\Ga,h)$ 
and the remaining `interior part'~$U$ is flat 
(cf.~\cite[Section~5]{Ama20a}). Thus there is hope to get sufficiently 
detailed information on the \hbox{$L_p$~spectrum} of linear divergence 
form operators. 

\smallskip 
In particular, suppose 
\hb{u_0\in V\cap W_{\coW p}^2(\Om,\BR^n;s)} is a stationary point 
of~\eqref{Q.q}. If it can be shown that the spectrum of~$\cA(u_0)$ 
is contained in an interval $[\al,\iy)$ with 
\hb{\al>0}, then, using the decay properties of the analytic semigroup 
generated by~$-\cA(u_0)$, it can be shown that $u_0$~is an asymptotically 
stable critical point of~\eqref{Q.q}.\hfill$\qed$ 
\end{remark} 
\begin{remark}\label{rem-E.A} 
\uti{The technique of a~priori estimates} 
The general version of \cite[Theorem~5.1.1]{PS16a} exploits the 
regularization properties of analytic semigroups. Suppose that 
alternative~(ii) of Theorem~\ref{thm-Q.RD} does not occur. Also assume that 
there can be established a uniform a~priori bound in a Besov space 
\hb{B_{p,s}^{\sa-2/p}:=B_p^{\sa-2/p}(\Om,\BR^n;s)} with 
\hb{2/p<\sa<2}. Then we have global existence, \emph{provided} 
the embedding 
\hb{B_{p,s}^{2-2/p}\hr B_{p,s}^{\sa-2/p}} is compact (see 
\cite[Theorem~5.7.1]{PS16a}). Hence this technique is also not applicable 
to our equations. 

\smallskip 
However, we still have the possibility to use the 
interpolation-extrapolation techniques developed in \cite{Ama93a} and 
\cite{Ama95a} to switch to weak formulations. Then it might be possible 
to prove global existence  by more classical techniques using  
a~priori estimates with respect to suitable integral norms.\hfill$\qed$ 
\end{remark} 
{\small 
 \def\cprime{$'$} \def\polhk#1{\setbox0=\hbox{#1}{\ooalign{\hidewidth
  \lower1.5ex\hbox{`}\hidewidth\crcr\unhbox0}}}
\providecommand{\bysame}{\leavevmode\hbox to3em{\hrulefill}\thinspace}

\noindent 
Herbert Amann\\
Math.\ Institut, Universit\"at Z\"urich, Winterthurerstr.~190,\\     
CH 8057 Z\"urich, Switzerland, herbert.amann@math.uzh.ch 
}
\end{document}